\newcommand{\mytitle}{Accelerated Portfolio Optimization with Conditional Value-at-Risk Constraints using a Cutting-Plane Method}
\newcommand{\R}{{\mathbb R}}
\newcommand{\N}{{\mathbb N}}
\newcommand{\T}{\mathrm{T}}
\definecolor{dkgreen}{rgb}{0,0.6,0}
\definecolor{gray}{rgb}{0.5,0.5,0.5}
\definecolor{mauve}{rgb}{0.58,0,0.82}
\tiny\color{gray},
\begin{document}
    \title{\mytitle\thanks{The US patent \cite{OptPatent} uses methods from this article.}}
    \author{Georg Hofmann\thanks{The Research for this paper is supported by Validus Research Inc., Waterloo Ontario, Canada.}}
    \maketitle

\begin{abstract}
  Financial portfolios are often optimized for maximum profit while subject to a constraint formulated in terms of the Conditional Value-at-Risk (CVaR). This amounts to solving a linear problem. However, in its original formulation this linear problem has a very large number of linear constraints, too many to be enforced in practice. In the literature this is addressed by a reformulation of the problem using so-called dummy variables. This reduces the large number of constraints in the original linear problem at the cost of increasing the number of variables. In the context of reinsurance portfolio optimization we observe that the increase in variable count can lead to situations where solving the reformulated problem takes a long time. Therefore we suggest a different approach. We solve the original linear problem with cutting-plane method: The proposed algorithm starts with the solution of a relaxed problem and then iteratively adds cuts until the solution is approximated within a preset threshold. This is a new approach. For a reinsurance case study we show that a significant reduction of necessary computer resources can be achieved.
\end{abstract}

\begin{keywords} Optimization, financial portfolio, linear programming, conditional value-at-risk constraint, cutting plane
\end{keywords}

\begin{AMS}

52A40  	Inequalities and extremum problems


90C05  	Linear programming

90C90  	Applications of mathematical programming

90B50  	Management decision making, including multiple objectives







\end{AMS}

\section{Introduction}

In the industry, risk management of financial portfolios is 
commonly 
ap\-proached with a Monte Carlo simulation. The model typically consists of a $J{\times}n$-matrix $Y$. Its $J$ rows represent scenarios, i.e. outcomes of a simulation that are considered to be equally probable. Its $n$ columns represent different instruments that make up the portfolio. The entries of $Y$ represent the value of an instrument for a particular outcome. So the column means of $Y$ yield the expected value of the instruments. The row sums of $Y$ yield the portfolio value for each scenario. We call this the \emph{outcome vector}. In the context of reinsurance portfolios the value of instruments can become negative due to catastrophic loss simulated in the scenarios. However the expected instrument value is typically positive due to premium collected. We refer to the matrix $Y$ as the \emph{scenario matrix}.

In this article we represent the risk of a portfolio by the Conditional Value-at-Risk (CVaR), also called the Tail Value-at-Risk (TVaR).\footnote{For a a valuable discussion of risk metrics commonly used in this context, see the introduction of \cite{Krokhmal02portfoliooptimization}. Of particular interest is the comparison with the most common risk metric, the Value-at-Risk (VaR) which is simply a percentile of the loss distribution.} 
In the model above the CVaR at a return period $\rho$ can be calculated as follows, provided that the number of scenarios $J$ is a multiple of the return period $\rho$: Let $y$ be the row sums of $Y$. Let $y'$ be a reordering of $y$ in a way that the components $y'$ are increasing. Define the vector $r$ by setting
  \begin{align*}
    r_j=
    \begin{cases}
      -\frac{\rho}{J}&\text{for~} i=1,~2,\dots,~\frac{J}{\rho}\\
      0&\text{for~} i=\frac{J}{\rho}+1,~\frac{J}{\rho}+2,\dots,~J.
    \end{cases}
  \end{align*}
for every $j=1,~2,~,\dots,~J$.
Then the desired CVaR is given by the matrix product $r^\T y'$. An equivalent way of defining the CVaR of $y$ is the following:
\begin{align}\label{eq:muR}
\mu_r(y)=\max_{\pi\in S_J}r^\T P_\pi y,
\end{align}
where $S_J$ is the set of all permutations on the set $\{1,~2,~,\dots.,~J\}$ and $P_\pi$ is the permutation matrix associated with the permutation $\pi$. We refer to $r$ as the \emph{risk vector} associated with the \emph{risk metric} $\mu_r$. 

In order to model change in the portfolio composition an $n$-dimensional vector $x$ is used that is subject to the constraints 
\begin{align}\label{eq:posConstr1}
\underline x\le x\le\overline x,
\end{align} 
where $\underline x$ and $\overline x$ are both $n$-dimensional vectors. The inequality~(\ref{eq:posConstr1}) is to be read component by component. In other words, if $x_i$ denotes the $i$th component of $x$ then we enforce the following inequality for every $i=1,~2,~,\dots,~n$:
\begin{align*}
\underline x_i\le x_i\le\overline x_i.
\end{align*} 
We refer to $x$ as a \emph{position vector} and to (\ref{eq:posConstr1}) as the \emph{position constraint}. For a given position vector $x$ the altered scenario matrix is obtained by multiplying the columns of $Y$ with the components of $x$. The altered outcome vector is simply the product $Yx$.

Suppose the risk of the altered portfolio to be bounded by a number $R$, in other words the inequality
\begin{align*}
\mu_r(Yx)\le R
\end{align*}
is to be enforced. An equivalent way of formulating this constraint is
\begin{align*}
 rP_\pi Yx\le R\text{~~~for every permutation $\pi\in S_J$}.
\end{align*}
This is a system of $J!$ linear inequalities. The number of inequalities could be reduced to account for the fact that $r$ contains zeros, but it is generally still very large. In practice this prevents the problem from being passed to a standard algorithm in its original formulation.

Recently a practical solution has been presented in \cite{Rockafellar00optimizationof} and refined in \cite{Uryasev00Conditional} and \cite{Krokhmal02portfoliooptimization}. It is based on a reformulation of the problem that increases the number of variables from $n$ to $n+J+1$ but decreases the number of constraints to $2J+2n+1$. In many cases this makes the problem accessible to standard algorithms for linear problems. It allows to process 'over one hundred instruments and one thousand scenarios'. (See  \cite{Krokhmal02portfoliooptimization}, Introduction, page 2.)

 However, as the number of scenarios $J$ increases, these algorithms may still take a long time to complete or fail due to a overly high demand for computer resources. In the reinsurance industry a simulation with 1 million scenarios is not an exception, neither is the usage of 10 thousand instruments. 

We propose a different approach to solving the linear problem at hand. We address the original problem using a cutting-plan method. The iterative algorithm starts with only the position constraints enforced. Then, step by step, relevant constraints, so-called cuts, are enforced. The algorithm terminates once the set of constraints is large enough to force the observed risk $R^*$ to be sufficiently close to $R$. To be more precise, a \emph{risk error tolerance} $\delta$ can be specified. The termination condition is then given by:
\begin{align*}
  \left|\frac{R^*-R}{R}\right|\le \delta
\end{align*}

In a case study we mimic typical reinsurance portfolio scenarios of different sizes. We provide the number of iterations when our algorithm is applied and also give a comparison of run times when the algorithm proposed by \cite{Rockafellar00optimizationof} and our algorithm are applied. We were able to run our algorithm for one million scenarios and 10 thousand instruments, and this data size does not form an upper bound.
  
While this case study is motivated from practice in the reinsurance industry, the algorithm we propose is not at all limited to this industry. It is applicable to optimization of any financial portfolio. In Section~\ref{sec:extension} we list some interesting linear constraints that an extension of the proposed algorithm can handle. Applications beyond the financial portfolios could be found is in other areas of risk management.

\section{The Linear Problem}

Throughout the remainder of the article assume that the number of instruments $n$ and the number of scenarios $J$ are given. Let $x$ be a position vector subject to the position constraint 
\begin{align}\label{eq:posConstr}
\underline x\le x\le\overline x.
\end{align} 
Fix a scenario matrix $Y$. Let $r$ be a $J$-dimensional vector with non-positive components. The example of a TVaR risk vector from the introduction is instructive. But other choices are possible, in particular a weighted combination of different TVaR risk vectors is admissible. In addition to constraint (\ref{eq:posConstr}) we enforce
\begin{align}\label{eq:riskConstr2}
 rP_\pi Yx\le R\text{~~~for every permutation $\pi\in S_J$}.
\end{align}
Let $p$ be an $n$-dimensional vector. It is helpful to think of $p$ has the vector column sums of $Y$. This way, the $p^\T x$ represents the expected altered portfolio value. But other choices of $p$ are possible.

The linear problem at hand is to

\begin{align}\label{eq:orrigProb}
\text{Maximize $p^\T x$ subject to the constraints (\ref{eq:posConstr}) and (\ref{eq:riskConstr2}).}
\end{align}

Throughout the remainder of the article we will refer to this as the \emph{original linear problem}. If there is at least one vector $x$ that satisfies the constraints of the problem, then the problem has a solution since $x$ is constrained to a compact domain. Typically the $n$ dimensional vector of ones lies within the constraints, as it represents the unaltered portfolio, which usually satisfies the risk constraints.

\section{The Algorithm}

In practice the number of constraints in (\ref{eq:riskConstr2}) is too large for all of them to be enforced. We propose the following algorithm to add only constraints that are relevant for the solution. This approach is similar to the cutting-plane method. 

The following inputs are required. We use the terminology and the notation of previous sections. The following are the input parameters:
\begin{center}
\begin{tabular}{ll}
$Y$&Scenario matrix\\
$p$&Profit vector\\
$\overline x$&Upper position constraint vector\\
$\underline x$&Lower position constraint vector\\
$R$&Target risk\\
$\delta$&Risk error tolerance
\end{tabular}
\end{center}
The following steps describe the algorithm.
\begin{enumerate}
\item
  \emph{Formulate the relaxed problem:} Initiate the set of constraints $C$ as the constraints defined in (\ref{eq:posConstr}).
\item
  \emph{Solve the current problem:} Maximize $p^\T x$ subject to the constraints in $C$. Denote the solution by $x^*$.
\item 
  \emph{If the achieved risk $R^*$ is close enough to the target risk $R$, go to Step~6:} Set 
  \begin{align*}
  R^* = \mu_r(Yx^*).
  \end{align*}
  If $|R^*-R|\le\delta R$ then jump to Step~6.
\item
  \emph{Add a constraint to the problem:} Let $\pi$ be a permutation such that $P_\pi Yx^*$ is in increasing order. Add the constraint
  \begin{align*}
   rP_\pi Yx\le R
  \end{align*}
  to the set $C$.
\item     
  \emph{Return to Step~2.}
\item 
  \emph{Optional verification of the solution to the relaxed problem:} By solving the dual problem to the one formulated in Step~2, it can be verified that an optimal solution has been obtained.  
\item 
\emph{Algorithm output:} The output of the algorithm is the solution $x^*$, the achieved risk $R^*$ and the obtained profit $s=p^\T x^*$.   
\end{enumerate}

This is an iterative algorithm and its output is a numerical approximation of the theoretical solution. In the next section we describe the nature of this approximation and how its error can be quantified. In the section following that we provide a case study that includes the number of iterations observed for actual data. In reality the approximation error is comparable to rounding errors that are typical for any numerical solution.

\section{Convergence}

In this section we give a precise description of how the solution achieved form the iterative algorithm approximates the solution of the original linear problem~(\ref{eq:orrigProb}). Let $D$ be the set of all numbers $R'$ for which the following set is not empty:
\begin{align*}
\{x'\in[\underline x,\overline x]:\mu_r(Yx')\le R'\}
\end{align*}
Note that the target risk $R$ should be in this set, otherwise constraints~(\ref{eq:posConstr}) and (\ref{eq:riskConstr2}) can't be satisfied.
For every $R'\in D$ set
\begin{align}
  f(R')=&\max_{
  \begin{array}{c}
    \underline x\le x'\le\overline x\\
    \mu_r(Yx')\le R'
  \end{array}
  } p^\T x'.
\end{align}
This is well-defined, since we are taking the maximum of a continuous function over a compact set. So, this assignment defines a function from $D$ to $\R$. The function $f$ is called the \emph{efficient frontier}. Note that $f(R)$ is precisely the maximum profit that is to be determined in the original problem~(\ref{eq:orrigProb}).


\begin{figure*}[t]
      \centering
      \vspace{-.5cm}    
      \par\includegraphics[width=20cm]{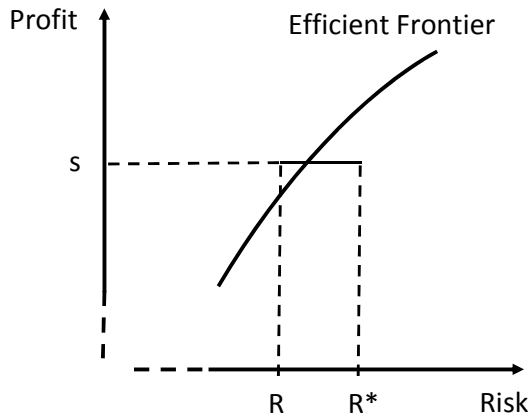}  
      \par\vspace{-8cm}   
      \caption{The line segment from $(R,s)$ to $(R^*,s)$ intersects the efficient frontier. This guarantees that the algorithm approximates a point on the frontier.}
      \label{fig:efficientFrontier}
\end{figure*}
Now consider the algorithm again with the target risk $R$. Assume that a solution to the original problem exists. Then solutions exists to any of the relaxations of the problem occurring in the algorithm. Suppose the algorithm terminates with the achieved risk $R^*$ and the obtained profit $s$. Since these values resulted from a relaxed problem we have $s\ge f(R)$. On the other hand, due to the definition of $f$, we have $s\le f(R^*)$.

This means that there is a number $R'$ with 
$R\le R'\le R^*$ such that $f(R')=s$. In other words, there is a frontier point between the points $(R,s)$ and $(R^*,s)$ in the efficient frontier diagram. See Figure~\ref{fig:efficientFrontier}.
This follows from the Intermediate Value Theorem. A proof that $f$ is actually continuous is provided in Appendix~\ref{sec:Proof}.

Since the distance between $R$ and $R^*$ can be controlled by specifying $\delta$, the distance between the output point $(R^*,s)$ and a point on the frontier is also controlled by $\delta$. 


\section{A Case Study}

In this section, the application of the two methods is compared:
\begin{enumerate}
\item[(A)] The method proposed in \cite{Rockafellar00optimizationof}, which relies on a reformulation of the underlying linear problem.
\item[(B)] The method proposed in this article, which uses a cutting-plane approach to address the underlying linear problem.
\end{enumerate}
A few combinations of values for $n$ and $J$ are selected and an example of $Y$ is generated to allow a comparison on actual data. The scenario matrix $Y$ created randomly with a typical reinsurance portfolio in mind. However it is important to emphasize that it is not based on any true data and that it should not be used to represent in any way a real portfolio.

The matrix $Y$ is created in the following way. An $f{\times}n$ matrix $L$ is created where $f=100$ in this case study. Its entries are randomly generated between 0 and 1 according to a uniform distribution. A $J{\times}f$ matrix $F$ is created. Each entry is randomly generated.  The underlying distribution is derived from a log normal by an affine transformation such that defined as followed: If $N$ is distributed according to a standard normal distribution, then the entries of $F$ are sampled from the distribution of the transformed variable $2-e^N$. Its support is $(-\infty, 2]$ and its expected value is $2-\sqrt e\approx 0.351$. The matrix $Y$ is finally computed as the product $FL$. In this way each instrument is a random linear combination of $f$ factors. This introduces a correlation between instruments that we consider typical for a reinsurance portfolio.

In Appendix~\ref{sec:rCode}, code in the language R is provided to show how the scenario matrix $Y$ is created and how the algorithm is implemented. 
\begin{table}
\centering
\begin{tabular}{|r|r||r|r|r|r|r|r|}
  \hline
  \multicolumn{1}{|c|}{\multirow{2}{*}{$J$}} &
  \multicolumn{1}{|c||}{\multirow{2}{*}{$n$}} &
  \multicolumn{2}{|c|}{Iterations} &
  \multicolumn{2}{|c|}{Variables} &
  \multicolumn{2}{|c|}{Constraints}\\
  \cline{3-8}
  & &
  \multicolumn{1}{|c|}{(A)}&
  \multicolumn{1}{|c|}{(B)}&
  \multicolumn{1}{|c|}{(A)}&
  \multicolumn{1}{|c|}{(B)}&
  \multicolumn{1}{|c|}{(A)}&
  \multicolumn{1}{|c|}{(B)}\\
  \hline
     1,000 &   100 & 1 &    4 &     1,101 &   100 &     2,201 &   204\\
    10,000 &   200 & 1 &   14 &    10,201 &   200 &    20,401 &   414\\
   100,000 &   500 & 1 &   58 &   100,501 &   500 &   201,001 & 1,058\\
 1,000,000 & 1,000 & 1 &  223 & 1,001,001 & 1,000 & 2,002,001 & 2,223 
   \\\hline
\end{tabular}
\caption{Comparison of Approach~(A) and (B)}  
\label{tab:compApproach}
\end{table}
For  each of the two methods Table~\ref{tab:compApproach} provides information about the number constraints and variables in the linear problem to be solved. Since method (B) is iterative, several linear problems have to be solved in the course of the algorithm, as the number of constraints, we report the number of constraints in the last iteration. 
Table~\ref{tab:accel} provides a comparison in run times. 
\begin{table}
\centering
\begin{tabular}{|r|r|r|r|r|}
\hline
  $J\backslash n$ & 100 & 200 & 500 & 1,000\\\hline
  1,000 & 13 & 13 & 11 & 8 \\
  2,000 & 22 & 26 & 19 & 15 \\
  5,000 & 43 & 48 & 43 & 41 \\
  10,000 & 96 & 90 & 115 & 97 \\
  \hline
\end{tabular}
\caption{Acceleration of run time when moving from Approach~(A) to Approach~(B).}  
\label{tab:accel}
\end{table}
The factor report is the run time of method~(A) divided by method~(B). For both tables the CVaR at a return period of 100 was used.

\section{Extensions of the Algorithm}\label{sec:extension}

As discussed earlier, the application of the algorithm is not limited to the reinsurance case investigated in the previous section. It can be used to optimize financial portfolios in general. Further applications in areas of risk management are likely. 

The algorithm can also be extended to handle further linear constraints. These include the constraints described in \cite{Krokhmal02portfoliooptimization}: In that article they are referred to as 
\begin{enumerate}
\item Transaction Cost Balance Constraints (\cite{Krokhmal02portfoliooptimization}~7.3)
\item Value Constraints (\cite{Krokhmal02portfoliooptimization}~7.4)
\item Liquidity Constraints (\cite{Krokhmal02portfoliooptimization}~7.5)
\end{enumerate}
Note that the constraints referred to as {\it Bounds on Positions} in \cite{Krokhmal02portfoliooptimization}~7.5, are effectively the constraints we cover with (\ref{eq:posConstr}).

In practice it is often desirable to calculate segments of the efficient frontier describing the trade-off between risk and profit. The proposed algorithm can be used to accomplish this by stepping through a range of risk values. For each risk value the optimal profit value is calculated. This method can provide a desirable range of points on the frontier.

\section{Conclusion}

We present a novel approach to solving an existing problem in practice. In a case study that is geared towards reinsurance business, we show that a significant gain in performance and a reduction in computer resources can be achieved using our method. We prove that the proposed algorithm approximates an efficient frontier point within a customizable accuracy. With the optional validation step in the algorithm, it can be guaranteed that an optimal solution has been captured.

\appendix

\section{Proposition and Proof}\label{sec:Proof}

Let $X$ be a non-empty compact convex subset of $\R^n$. Let 
\begin{align*}
\mu:~X\to\R
\end{align*}
be a continuous, convex function. Note that the image $\mu(X)$ is compact so we can set 
\begin{align*}
d=\min(\mu(X)).
\end{align*}
Let 
\begin{align*}
p:\R^n\to\R
\end{align*}
be a concave continuous function and define the function 
\begin{align}
  f:[d,\infty)\to\R,~f(R)=&\max_{x\in\mu^{-1}\big((-\infty, R]\big)}p(x)\\
    =& \max_{
    \substack{
      x\in X\\ 
      \mu(x)\le R
    }
    }p(x)
\end{align}
It is well-defined, since $\mu^{-1}\big((-\infty, R]\big)$ is compact and $p$ is continuous. Note that $f$ is an increasing function.
\begin{proposition}
The function $f$ is continuous.
\end{proposition}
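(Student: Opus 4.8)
The plan is to exploit the convex-analytic structure hidden in the definition of $f$. The key observation is that $f$ is concave. To see this, I would fix $R_1,R_2\in[d,\infty)$ and $\lambda\in[0,1]$, set $R_\lambda=\lambda R_1+(1-\lambda)R_2$, and let $x_1,x_2\in X$ be maximizers attaining $f(R_1),f(R_2)$, so that $\mu(x_i)\le R_i$. Since $X$ is convex, $x_\lambda:=\lambda x_1+(1-\lambda)x_2\in X$, and convexity of $\mu$ gives $\mu(x_\lambda)\le\lambda\mu(x_1)+(1-\lambda)\mu(x_2)\le R_\lambda$, so $x_\lambda$ is feasible at level $R_\lambda$. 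Concavity of $p$ then yields $f(R_\lambda)\ge p(x_\lambda)\ge\lambda p(x_1)+(1-\lambda)p(x_2)=\lambda f(R_1)+(1-\lambda)f(R_2)$. Hence $f$ is concave on $[d,\infty)$.

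Next I would invoke the standard fact that a finite-valued concave function on an interval is continuous on the interior of that interval. This immediately gives continuity of $f$ on the open ray $(d,\infty)$ and reduces the whole problem to a single point.

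The main obstacle is continuity at the left endpoint $R=d$, where the interior-continuity result does not apply; indeed a concave function may in principle jump upward at an endpoint, so this case must be ruled out by hand. I would handle it with a compactness argument establishing right-continuity at $d$. Take any sequence $R_k\downarrow d$ and pick maximizers $x_k\in X$ with $\mu(x_k)\le R_k$ and $p(x_k)=f(R_k)$. Since $X$ is compact, pass to a subsequence $x_{k_j}\to x^\ast\in X$. Continuity of $\mu$ forces $\mu(x^\ast)\le\lim R_{k_j}=d$, while $\mu(x^\ast)\ge d$ because $d=\min\mu(X)$; thus $\mu(x^\ast)=d$ and $x^\ast$ is feasible at level $d$, so $p(x^\ast)\le f(d)$. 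Because $f$ is increasing, $L:=\lim_k f(R_k)$ exists and satisfies $L\ge f(d)$; along the subsequence, continuity of $p$ gives $f(R_{k_j})=p(x_{k_j})\to p(x^\ast)$, so $L=p(x^\ast)\le f(d)\le L$. Therefore $L=f(d)$, which is exactly right-continuity at $d$.

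Combining the two pieces, $f$ is continuous on $(d,\infty)$ and right-continuous at $d$, hence continuous on all of $[d,\infty)$. I expect the endpoint step to be the only delicate part: the compactness extraction together with the two-sided squeeze $f(d)\le L\le f(d)$ is what prevents an upward jump, and the same argument in fact shows that $f$ is upper semicontinuous throughout, so that no convexity is even needed for that inequality.
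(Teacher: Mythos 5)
Your proof is correct and follows essentially the same route as the paper's: concavity of $f$ (via convexity of $X$ and $\mu$ and concavity of $p$) gives continuity on the open ray, and a compactness/subsequence argument rules out a jump at the left endpoint $d$. Your treatment of the endpoint is in fact slightly more explicit than the paper's, since you spell out that continuity of $\mu$ forces $\mu(x^\ast)\le d$ before concluding $p(x^\ast)\le f(d)$, a step the paper leaves implicit.
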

\begin{proof}
First we prove that $f$ is concave. That implies that $f$ is continuous on any open interval. Once this is established, the only way that continuity of $f$ can fail is that it jumps at $d$. In a second step we will show that this is not possible.

For the proof of concavity let $R_1$ and $R_2\in D$. Now let $x_1$ and $x_2\in X$ such that
\begin{align}\label{eq:xDef}
x_i\in \mu^{-1}\big((-\infty, R]\big)\text{~~~and~~~} f(R_i)=p(x_i)
\end{align}
for each $i=1,2$. Now let $t\in[0,1]$. Then
\begin{align*}
 \mu(tx_1 + (1-t)x_2)
 ~\le~& t\mu(x_1)+(1-t)\mu(x_2)&\text{since $\mu$ is convex}\\
 ~\le~& tR_1+(1-t)R_2&\text{according to (\ref{eq:xDef})},
\end{align*}
in other words
\begin{align}\label{eq:concave1}
 tx_1 + (1-t)x_2
 \in \mu^{-1}\Big(\big(-\infty, tR_1+(1-t)R_2\big)\Big).
\end{align}
In turn, this implies
\begin{align*}
tf(R_1)+(1-t)f(R_2)
~=~& tp(x_1) + (1-t)p(x_2)&\text{according to (\ref{eq:xDef})}\\
\le~& p(tx_1 + (1-t)x_2)&\text{since $p$ is concave}\\
\le~&\max_{x\in\mu^{-1}\big((-\infty,tR_1+(1-t)R_2]\big)}p(x)&\text{due to (\ref{eq:concave1})}\\
=~&f(tR_1+(1-t)R_2).
\end{align*}
This proves that $f$ is concave.

Now let $(R_i)$ be a sequence in $(d,\infty)$ with $\lim_{i\to\infty}R_i=d$. We will prove \begin{align}\label{eq:cont}
\lim_{i\to\infty}f(R_i)= f(d)
\end{align} in order to see that $f$ is continuous at $d$. Without loss of generality we may assume that $(R_i)$ is decreasing. Since $f$ is an increasing function the sequence $\big(f(R_i)\big)$ is also decreasing. Since it is bounded from below by 
$\min p(X)$ it is convergent. By the definition of $f$ there is an $x_i\in X$ such that 
\begin{align*}
\mu(x_i)\leq R_i \text{~~~and~~~} p(x_i)=f(R_i)
\end{align*}
for every $i\in\N$. In this way we obtain a sequence $(x_i)$. Since $X$ is compact, there is a subsequence $(x_{i_j})$ that converges to an $x\in X$. We observe that
\begin{align*}
f(d)\le&\lim_{j\to\infty}f(R_{i_j})&\text{since $f$ is increasing}\\
=&
\lim_{j\to\infty}p(x_{i_j})
=p(x)\\
\le&
\max_{x'\in X,~ \mu(x')\le d}p(x')
=
f(d)\\
\end{align*} 
This implies $\lim_{j\to\infty}f(R_{i_j})=f(d)$ and thus (\ref{eq:cont}). 
\end{proof}

This proposition can be applied to the case 
\begin{align*}
\mu(x)=\mu_r(Yx)
\end{align*}
for the function $\mu_r$ defined in (\ref{eq:muR}). For that it should be noted that $\mu_r$ is continuous, since it is the maximum of a finite number of continuous functions.

\section{R Code}\label{sec:rCode}
    
The code below runs in R version 3.0.2 once the package {\tt lpSolve} is loaded. This package is based on {\tt lp\_solve} 5.5.
\lstset{language=R}
\begin{lstlisting}
# This code is part of the article "Accelerated Portfolio Optimization 
# with Conditional Value-at-Risk Constraints using a Cutting-Plane Method
library(lpSolve)

### Preset constants.
num.scenarios <- 1E4
num.instruments <- 1000

### Functions
GetRiskMetricVector <- function(return.period){
  rp.scenarios <- num.scenarios / return.period
  return(c(rep(-1 / rp.scenarios, rp.scenarios), 
    			 rep(0, num.scenarios - rp.scenarios)))
}

### Main Code
# The variables F, L, Y, p, r, delta are explained in the article.
f=100
F <- matrix(2 - rlnorm(n=num.scenarios * f), nrow=num.scenarios)
L <- matrix(runif(num.instruments * num.factors), nrow=f)
Y <- F %*% L
delta <- 1E-6
# As a risk metric we use the average of the CVaR 100 and the CVar 1000
r <- 0.5 * GetRiskMetricVector(100) + 0.5 * GetRiskMetricVector(1000)
A <- rbind(diag(nrow=num.instruments), diag(nrow=num.instruments))
b <- c(rep(1.5, num.instruments), rep(0.5, num.instruments))
p <- colSums(Y) / num.scenarios
constr.vec <- c(rep("<=", num.instruments), rep(">=", num.instruments))
# We set the risk constraint to the level of risk in the orriginal portfolio:
scenario.outcome <- rowSums(Y)
scenario.outcome <- scenario.outcome[order(scenario.outcome)]
risk.constraint <- r %*% scenario.outcome
repeat{
  lp.sol <- lp(direction="max", objective.in=p, const.mat=A, 
  						 const.dir=constr.vec, const.rhs=b)
  if(lp.sol$status>0)stop("Lp solve error: ", lp.sol$status)
  scenario.outcome <- Y %*% lp.sol$solution
  new.order <- order(scenario.outcome)
  inv.new.order <- invPerm(new.order)
  r.reordered <- r[inv.new.order]
  achieved.risk <- as.vector(r.reordered %*% scenario.outcome)
  if(achieved.risk - risk.constraint < delta) break 
  A <- rbind(A, as.matrix(t(r.reordered) %*% Y))
  b <- c(b, risk.constraint)
  constr.vec <- c(constr.vec, "<=")
}
cat("Risk constraint:", risk.constraint, "\r\n")
cat("Achieved risk:", achieved.risk, "\r\n")
cat("Profit:", lp.sol$objval, "\r\n")
\end{lstlisting}

\bibliographystyle{siam}
\bibliography{../../../Latex/references/references}

\end{document}